\providecommand{\keywords}[1]{\textbf{\textit{key words: }} #1}
\newcommand{\pcite}[1]{\citeauthor{#1}'s \citeyearpar{#1}}
\newtheorem{theorem}{Theorem}
\newtheorem{remark}{Remark}
\title{Analysis of the P\'olya-Gamma block Gibbs sampler for Bayesian logistic linear mixed models}
\date{November, 2017}
\author{ Xin Wang\thanks{Email: xinwang@iasate.edu}  \, and Vivekananda Roy\thanks{Email: vroy@iastate.edu} \\
	Department of Statistics, Iowa State University, Ames, IA}
\begin{document}
\maketitle

\begin{abstract}
In this article,  we construct a two-block Gibbs sampler using \pcite{polson2013bayesian} data augmentation technique with P\'olya-Gamma latent variables for Bayesian logistic linear mixed models under proper priors. Furthermore, we prove the uniform ergodicity of this Gibbs sampler, which guarantees the existence of the central limit theorems for MCMC based estimators. 
\end{abstract}
\keywords{Data augmentation, Markov Chain, Logit link, P\'olya-Gamma distribution, Uniform ergodicity}

\section{Introduction}
\label{sec_intro}
Consider the logistic linear mixed model set-up \citep{McCullochCharlesE2008Glam, mcculloch2003generalized}.  Let  $(Y_1, Y_2, \dots, Y_N)$ denote the vector of $\text{Binomial}(n_i,p_i)$ random variables,  $\bm{x}_i$ and $\bm{z}_i$ be the $p \times 1$ and $q \times 1$ known covariates and random effect design vectors respectively associated with the $i$th observation for $i=1,\dots,N$. Let $\bm{\beta} \in \mathbb{R}^p$ be the unknown vector of regression coefficients and $\bm{u} \in \mathbb{R}^q$ be the random effects vector. Assume that $p_i = F(\bm{x}_i^T\bm{\beta} + \bm{z}_i^T\bm{u})$, where $F$ is the standard logistic distribution function, that is $F(t) \equiv e^t/(1+e^t)$ for $t\in \mathbb{R}$. Suppose we have $r$ random effects with $\bm{u} = (\bm{u}_1^T,\dots, \bm{u}_r^T)^T$, where $\bm{u}_j$ is a $q_j\times1$ vector with $q_j >0$, $q_1+\cdots + q_r = q$, and $\bm{u}_j \stackrel{\text{ind}}{\sim} N(0, \bm{I}_{q_j}1/\tau_j)$, where $\tau_j\in \mathbb{R}_+ = (0,\infty)$ is the precision parameter associated with $\bm{u}_j$ for $j=1,\dots,r$. The joint distribution of $\bm{u}$ is $N\left(0,\bm{D}(\bm{\tau})^{-1}\right)$, where $\bm{D}(\bm{\tau})=\oplus_{j=1}^{r}\tau_{j}\bm{I}_{q_{j}}$, $\bm{\tau}= (\tau_1, \dots, \tau_r)$ and $\oplus$ denotes the direct sum. The data model for the logistic linear mixed model is
\begin{eqnarray}
\label{eq:data}
Y_{i} | \bm{\beta}, \bm{u} & \overset{ind}\sim & \text{Binomial}(n_i, p_{i}) \text{ for } i=1,\dots,N \;\text{ with} \nonumber\\
p_{i} & = & F(\bm{x}_{i}^{T}\bm{\beta}+\bm{z}_{i}^{T}\bm{u}) \text{ for }i=1,\dots,N, \\
\bm{u}_{j}| \tau_j & \overset{\text{ind}}\sim & N\Big(0,\frac{1}{\tau_{j}}\bm{I}_{q_{j}}\Big) , \, j=1,\dots,r .\nonumber
\end{eqnarray}
Let $\bm{y} = (y_1, y_2, \dots, y_N)^T$ be the observed Binomial response variables. The likelihood function for $(\bm{\beta}, \bm{\tau})$ is
\begin{equation}
\label{eq:likelihood}
L\left(\bm{\beta},\bm{\tau}|\bm{y}\right)=\int_{\mathbb{R}^{q}}\prod_{i=1}^{N} {n_i \choose y_i} \frac{\left[\exp\left(\bm{x}_{i}^{T}\bm{\beta}+\bm{z}_{i}^{T}\bm{u}\right)\right]^{y_{i}}}{\left[1+\exp\left(\bm{x}_{i}^{T}\bm{\beta}+\bm{z}_{i}^{T}\bm{u}\right)\right]^{n_{i}}}\phi_{q}\left(\bm{u};\bm{0},\bm{D}(\bm{\tau})^{-1}\right)d\bm{u},
\end{equation}
where $\phi_{q}(s;a,B)$ is the probability density function of the $q$-dimensional normal distribution with mean vector $a$ and covariance matrix $B$ evaluated at $s$. In Bayesian framework, let $\pi\left(\bm{\beta}\right)$ and $\pi\left(\bm{\tau}\right)$ be the prior densities for $\bm{\beta}$ and $\bm{\tau}$ respectively. Assume that $\bm{\beta}$ and $\bm{\tau}$ are apriori independent. The joint posterior density of $\left(\bm{\beta},\bm{\tau}\right)$ is
\begin{equation}
\label{eq:marginal}
\pi\left(\bm{\beta},\bm{\tau}|\bm{y}\right)=\frac{1}{c_0\left(\bm{y}\right)}L\left(\bm{\beta},\bm{\tau}|\bm{y}\right)\pi\left(\bm{\beta}\right)\pi\left(\bm{\tau}\right),
\end{equation}
where  $c_0(\bm{y})$ is the marginal density of $\bm{y}$ with $
c_0\left(\bm{y}\right)=\int_{\mathbb{R}_{+}^{r}}\int_{\mathbb{R}^{p}}L\left(\bm{\beta},\bm{\tau}|\bm{y}\right)\pi\left(\bm{\beta}\right)\pi\left(\bm{\tau}\right)d\bm{\beta}d\bm{\tau}$.

The posterior density \eqref{eq:marginal} is intractable for any choice of the prior distributions of $\bm{\beta}$ and $\bm{\tau}$.  Generally, Markov chain Monte Carlo (MCMC) algorithms are used for exploring these posterior densities. Even in the absence of random effects, for generalized linear models, MCMC algorithms are needed to summarize the associated posterior densities. For probit regression models, \cite{albert1993bayesian} proposed a widely used data augmentation (DA) algorithm to sample from the corresponding posterior distributions. \cite{roy2007convergence} and \cite{chakraborty2017convergence} proved the geometric ergodicity of this DA algorithm for Bayesian probit regression model under improper and proper priors respectively. \cite{wang2017convergence} recently extended the convergence rate analysis of the block Gibbs samplers based on this DA technique for Bayesian probit linear mixed models under both proper and improper priors.

For logistic regression models, there have been several attempts for producing a DA algorithm similar to \pcite{albert1993bayesian} algorithm for the probit regression model (see e.g. \cite{holmes2006bayesian} and \cite{fruhwirth2010data}). Unfortunately, these algorithms are far more complex than \pcite{albert1993bayesian} algorithm. Only recently, \cite{polson2013bayesian} produced such a DA algorithm for logistic regression models using P\'olya-Gamma latent variables. \cite{choi2013polya} proved uniform ergodicity of the P\'olya-Gamma  DA Markov chain under normal priors on the regression parameters. \cite{choi2017analysis} showed that the Markov operator based on \pcite{polson2013bayesian} DA algorithm for one-way logistic ANOVA model is trace-class, which implies that the associated Markov Chain is geometrically ergodic. Both \cite{choi2013polya} and \cite{choi2017analysis} considered the special case when the data are binary, that is $n_i = 1$ for all $i$. However, there is no result in the literature about convergence analysis of any Gibbs samplers for Bayesian logistic linear mixed models. In this article, we construct a two-block Gibbs sampler for Bayesian logistic linear mixed models with normal priors on regression parameters and truncated Gamma priors on precision parameters.  We further establish uniform ergodicity of this Gibbs sampler. % under the same mild condition on the variance components as in \cite{wang2017convergence}.

The article is organized as follows. In section \ref{sec_gibbs}, we
construct the two-block Gibbs sampler for the Bayesian logistic linear
mixed model under proper priors. In section \ref{sec_proof}, we prove the uniform ergodicity of the underlying Markov chain. Finally, we have some discussions in section \ref{sec_dis}.

\section{Two-block Gibbs sampler}
\label{sec_gibbs}

%For a logistic linear mixed model, we have
%\[
%P\left( Y_i = y_i|\bm{\eta},\bm{y}\right) = {n_i \choose y_i}\frac{\left[\exp\left(\bm{x}_{i}^{T}\bm{\beta}+%\bm{z}_{i}^{T}\bm{u}\right)\right]^{y_{i}}}{\left[1+\exp\left(\bm{x}_{i}^{T}\bm{\beta}+\bm{z}_{i}^{T}\bm{u}\right)\right]^{n_{i}}}
%\]

In \cite{polson2013bayesian}, a logistic linear mixed model example is introduced. In their example, normal distribution is used as the prior for regression coefficients and Gamma distribution is used as the prior for precision parameters. We assume the following priors: $\bm{\beta} \sim  N_p(\bm{Q}^{-1}\bm{\mu}_0, \bm{Q}^{-1})$ for some $p\times p$ positive definite matrix $\bm{Q}$ and $\bm{\mu}_0 \in \mathbb{R}^p$,
$\tau_j \overset{ind} \sim \text{truncated Gamma}(a_j,b_j,\tau_0), \,j=1,\dots,r$, where $b_j >0$ for $j=1,\dots, r$.  The density function of $\text{truncated Gamma}(a_j, b_j, \tau_0)$ is
\begin{equation}
\label{eq_tngamma0}
f\left(\tau_{j}|a_j,b_j,\tau_0\right)=[c\left(\tau_{0},a_{j},b_{j}\right)]^{-1}\tau_{j}^{a_{j}-1}\exp\left( - b_j \tau_j\right) I(\tau_j\geq\tau_0),
\end{equation}
where $c\left(\tau_{0},a_{j},b_{j}\right) = \int_{\tau_0}^{\infty}\tau^{a_{j}-1}\exp\left( - b_j \tau\right)  d\tau$ and $\tau_0 >0$ is a known constant.

 By Theorem 1 in \cite{polson2013bayesian}, 
\begin{align}
\label{eq:poly}
\frac{\left[\exp\left(\bm{x}_{i}^{T}\bm{\beta}+\bm{z}_{i}^{T}\bm{u}\right)\right]^{y_{i}}}{\left[1+\exp\left(\bm{x}_{i}^{T}\bm{\beta}+\bm{z}_{i}^{T}\bm{u}\right)\right]^{n_{i}}} & =2^{-n_{i}}\exp\left[\kappa_{i}\left(\bm{x}_{i}^{T}\bm{\beta}+\bm{z}_{i}^{T}\bm{u}\right)\right] \nonumber\\
 & \times \int_{0}^{\infty}\exp\left[-\omega_{i}\left(\bm{x}_{i}^{T}\bm{\beta}+\bm{z}_{i}^{T}\bm{u}\right)^{2}/2\right]p\left(\omega_{i}\right)d\omega_{i},
\end{align}
where $p\left(\omega_{i}\right)$ is the probability density function of the random variable $\omega_{i}\sim\text{PG}\left(n_i,0\right)$ and  $\kappa_{i}=y_{i}-n_i/2$ for $i=1,\dots,N$. Here, $\text{PG}(n_i,0)$ denotes the P\'olya-Gamma distribution with parameters $n_i$ and 0 with density \[
f(x\vert n_i,0) =\frac{2^{n_i-1}}{\Gamma(n_i)} \sum_{n=0}^{\infty} (-1)^n \frac{\Gamma(n+n_i)}{\Gamma(n+1)} \frac{(2n+n_i)}{\sqrt{2\pi x^3}} e^{-\frac{(2n+n_i)^2}{8x}}, \, x>0. \]
Let $\bm{\omega} = (\omega_1,\dots, \omega_{N})$ and the joint (posterior) density of $\bm{\beta},\bm{u},\bm{\omega}$ and $\bm{\tau}$ be 
\begin{align}
\label{eq:joint}
&\pi\left(\bm{\beta},\bm{u},\bm{\omega},\bm{\tau}|\bm{y}\right)  \propto \prod_{i=1}^{N}\exp\left[\kappa_{i}\left(\bm{x}_{i}^{T}\bm{\beta}+\bm{z}_{i}^{T}\bm{u}\right)-\omega_{i}\left(\bm{x}_{i}^{T}\bm{\beta}+\bm{z}_{i}^{T}\bm{u}\right)^{2}/2\right]p\left(\omega_{i}\right) \nonumber\\
& \times\phi_{q}\left(\bm{u};\bm{0},\bm{D(\bm{\tau})}^{-1}\right)\phi_{p}\left(\bm{\beta};\bm{Q}^{-1}\bm{\mu}_{0},\bm{Q}^{-1}\right)\prod_{j=1}^{r}  \tau_j^{a_j-1}e^{-b_j\tau_j}I(\tau_j \geq \tau_0).
\end{align}
From \eqref{eq:likelihood}, \eqref{eq:marginal} and \eqref{eq:poly}, it follows that $
\int_{\mathbb{R}^q}\int_{\mathbb{R}_{+}^N} \pi(\bm{\beta},\bm{u}, \bm{\omega},\bm{\tau}|\bm{y})d\bm{\omega} d\bm{u} = \pi(\bm{\beta},\bm{\tau}|\bm{y})$, 
which is our target posterior density. Using draws from all full conditional distribution distributions of \eqref{eq:joint}, we can run a Gibbs sampler with stationary density \eqref{eq:joint}. It is known that by combining and simultaneously drawing multiple parameters, the convergence of the Gibbs sampler can be improved \citep{liu1994covariance}, although the ``blocking" to be computationally efficient, the corresponding joint conditional distributions need to be tractable. Here we construct a two-block Gibbs sample for \eqref{eq:joint}. Let $\bm{\eta}=\left(\bm{\beta}^{T},\bm{u}^{T}\right)^{T}$, $\bm{\kappa} = (\kappa_{1},\dots,\kappa_{n})$ and $\bm{M}=\left(\bm{X},\bm{Z}\right)$
with $i$th row $\bm{m}_{i}^T$ and $\bm{\Omega}$ be the $n\times n$ diagonal matrix
with $i$th diagonal element $\omega_{i}$. Standard calculations show
that the conditional density of $\bm{\eta}$ is
\[
\pi\left(\bm{\eta}|\bm{\omega},\bm{\tau},\bm{y}\right)\propto\exp\left[-\frac{1}{2}\bm{\eta}^{T}\bm{M}^{T}\bm{\Omega}\bm{M}\bm{\eta}+\bm{\eta}^{T}\bm{M^T\kappa}\right]\exp\left[-\frac{1}{2}\bm{\eta}^{T}\bm{A}\left(\bm{\tau}\right)\bm{\eta}+\bm{\eta}^{T}\bm{l}\right],
\]
where $\bm{l}=\left(\bm{\mu}_{0}^{T},\bm{0}_{1\times q}\right)^{T}$, and $\bm{A}\left(\bm{\tau}\right) = \bm{Q} \oplus \bm{D}(\bm{\tau})$. That is,  \begin{equation}
\label{eq_eta}
\bm{\eta}|\bm{\omega},\bm{\tau},\bm{y}\sim N\left(\bm{\Sigma}^{-1}\bm{\mu},\bm{\Sigma}^{-1}\right)
\end{equation}
\[
\text{where }
\bm{\Sigma}=\left(\begin{array}{cc}
\bm{X}^{T}\bm{\Omega}\bm{X}+\bm{Q} & \bm{X}^{T}\bm{\Omega}\bm{Z}\\
\bm{Z}^{T}\bm{\Omega}\bm{X} & \bm{Z}^{T}\bm{\Omega}\bm{Z}+\bm{D}(\bm{\tau})
\end{array}\right)=\bm{M}^{T}\bm{\Omega}\bm{M}+\bm{A}\left(\bm{\tau}\right),\quad\bm{\mu}=\bm{M}^{T}\bm{\kappa}+\bm{l}.
\]
Similarly, the conditional density of $\left(\bm{\omega},\bm{\tau}\right)$ is
\begin{align*}
\pi\left(\bm{\omega},\bm{\tau}|\bm{\beta},\bm{u},\bm{y}\right) & \propto\prod_{i=1}^{N}\exp\left[-\omega_{i}\left(\bm{x}_{i}^{T}\bm{\beta}+\bm{z}_{i}^{T}\bm{u}\right)^{2}/2\right]p\left(\omega_{i}\right)\\
& \times\prod_{j=1}^{r}\tau_{j}^{a_{j}+q_{j}/2-1}\exp\left[-\left(\bm{u}_{j}^{T}\bm{u}_{j}/2+b_{j}\right)\tau_{j} \right] I(\tau_j \geq \tau_0).
\end{align*}
So, given $\bm{\beta},\bm{u}, \bm{y}$, we have that $\bm{\omega}$ and $\bm{\tau}$ are conditionally independent with 
\begin{align*}
\omega_{i}|\bm{\eta},\bm{y} & \overset{ind} \sim PG\left(n_i,|\bm{m}_{i}^{T}\bm{\eta}|\right),\, i=1,\dots,N,\\
\tau_{j}|\bm{\eta},\bm{y} & \overset{ind} \sim\text{truncated Gamma}\left(a_{j}+\frac{q_{j}}{2},b_{j}+\frac{\bm{u}_{j}^{T}\bm{u}_{j}}{2}, \tau_0\right), \, j=1,\dots,r.
\end{align*}

\begin{remark}
	\label{rem:tau0}
As in \cite{wang2017convergence}, we assume that the prior distribution for $\tau_j$ is a truncated Gamma distribution. However, while implementing the block Gibbs sampler in practice, a number slightly larger than the machine precision zero can be treated as $\tau_0$, practically avoiding the need to use any rejection sampling algorithms to draw from the truncated
	conditional distribution of $\bm{\tau}$. 
\end{remark}

Thus, one single iteration of the block Gibbs sampler $\{\bm{\eta}^{(m)}, \bm{\omega}^{(m)}, \bm{\tau}^{(m)}\}_{m=0}^{\infty}$ has the following two steps:

\begin{algorithm}[H]
	\caption*{{\bf{Algorithm}:} The $(m+1)$st iteration for the two-block Gibbs sampler}
	\begin{algorithmic}[1]
		\STATE Draw $\bm{\tau}_j^{(m+1)}$ from $\text{truncated Gamma}\left(a_{j}+q_{j}/2,b_{j}+\bm{u}_{j}^{T}\bm{u}_{j}/2,\tau_0\right)$ with $\bm{u} =\bm{u}^{(m)}$ for $j=1,\dots, r$, and independently draw $\omega_{i}^{(m+1)} \overset{\text{ind}}\sim  PG\left(n_i,|\bm{m}_{i}^{T}\bm{\eta}^{(m)}|\right)$ for  $i=1,\dots, N$.
		\STATE Draw $\bm{\eta}^{(m+1)}$ from (\ref{eq_eta}),  $\bm{\eta}^{(m+1)} \sim N_{p+q}\left( \bm{\Sigma}^{(m)-1}\left(\bm{M}^{T}\bm{\kappa}+\bm{l}\right),\bm{\Sigma}^{(m)-1}\right),$
where $\bm{\Sigma}^{(m)} = \bm{M}^{T}\bm{\Omega}^{(m+1)}\bm{M}+\bm{A}(\bm{\tau}^{(m+1)})$  and the diagonal elements of $\bm{\Omega}^{(m+1)}$ are $\omega_{i}^{(m+1)}$, $i=1,\dots,N$.
	\end{algorithmic}
\end{algorithm}

\cite{polson2013bayesian} developed an efficient method for sampling from PG distribution, which is the only nonstandard distribution involved in the above Gibbs sampler.

\section{Uniform ergodicity of the two-block Gibbs sampler }
\label{sec_proof}

In this section, we prove the uniform ergodicity of the two-block Gibbs sampler $\{\bm{\eta}^{(m)}, \bm{\omega}^{(m)}, \bm{\tau}^{(m)}\}_{m=0}^{\infty}$, which has the same rate of convergence
as the $\bm{\eta}$-marginal Markov chain $\{\bm{\eta}^{(m)}\}_{m=0}^{\infty}$
\citep{robe:rose:2001}. Below we analyze the $\bm{\Psi} \equiv \{\bm{\eta}^{(m)}\}_{m=0}^{\infty}$ chain.

Let $\bm{\eta}^{\prime}$ be the current state and $\bm{\eta}$ be the
next state, then the Markov transition density (Mtd) of $\bm{\Psi}$ is
\begin{equation}
\label{eq_mtd}
k(\bm{\eta}|\bm{\eta}^{\prime}) =  \int_{\mathbb{R}_+^r} \int_{\mathbb{R}_+^N} \pi(\bm{\eta}|\bm{\omega},\bm{\tau},\bm{y})\pi(\bm{\omega},\bm{\tau}|\bm{\eta}^{\prime},\bm{y})d\bm{\omega}d\bm{\tau},
\end{equation}
where $\pi(\cdot|\cdot,\bm{y})$'s are the conditional densities from section \ref{sec_gibbs}. Routine calculations show that
$k(\bm{\eta}|\bm{\eta}^{\prime})$ is reversible and thus invariant with respect to the marginal density of $\bm{\eta}$ denoted as $\pi(\bm{\eta}|\bm{y}) \equiv \int_{\mathbb{R}_+^r} \int_{\mathbb{R}_{+}^{N}} \pi(\bm{\eta},\bm{\omega},\bm{\tau}|\bm{y})d\bm{\omega}d\bm{\tau}$, where $\pi(\bm{\eta},\bm{\omega},\bm{\tau}|\bm{y})$ is defined in \eqref{eq:joint}. Since $k(\bm{\eta}|\bm{\eta}^{\prime})$ is strictly positive,  the Markov chain $\bm{\Psi}$ is Harris ergodic \citep{hobert2011data}.

Let $\mathscr{B}$ denote the Borel $\sigma$-algebra of $\mathbb{R}^{p+q}$ and
$K(\cdot,\cdot)$ be the Markov transition function corresponding to the Mtd
$k(\cdot, \cdot)$ in \eqref{eq_mtd}, that is, for any set
$A \in \mathscr{B}$, $\bm{\eta}^{\prime} \in \mathbb{R}^{p+q}$ and any
$j=0,1,\dots,$
\begin{equation}
\label{eq:def}
K(\bm{\eta}^{\prime}, A) = \mbox{Pr}(\bm{\eta}^{(j+1)} \in A | \bm{\eta}^{(j)} = \bm{\eta}^{\prime}) = \int_{A} k(\bm{\eta}| \bm{\eta}^\prime) d\bm{\eta}.
\end{equation}
Then the $m$-step Markov transition function is $K^m(\bm{\eta}^{\prime}, A) = \mbox{Pr}(\bm{\eta}^{(m+j)} \in A | \bm{\eta}^{(j)} = \bm{\eta}^{\prime})$. Let $\Pi(\cdot|\bm{y})$ be the probability measure with density
$\pi(\bm{\eta}|\bm{y})$. The Markov chain $\bm{\Psi}$ is geometrically ergodic if
there exists a constant $0 < t <1$ and a function
$G: \mathbb{R}^{p+q} \mapsto \mathbb{R}^+$ such that for any
$\bm{\eta} \in \mathbb{R}^{p+q}$,
\begin{equation}
\label{eq:ge}
||K^m(\bm{\eta},\cdot) - \Pi(\cdot|\bm{y})||:=\sup_{A\in \mathscr{B}} |K^m(\bm{\eta}, A) - \Pi(A|\bm{y})| \leq G(\bm{\eta}) t^m.
\end{equation}
If $G(\bm{\eta})$ is bounded above, then the corresponding Markov chain is uniformly ergodic. The following theorem establishes uniform ergodicity of the Markov chain $\bm{\Psi}$ by constructing a minorization condition.

\begin{theorem}
\label{them}
Assume that $a_j + q_j/2 \geq 1$ and $b_j >0$ for all $j$, then the Markov chain $\bm{\Psi}$ is uniformly ergodic.
\end{theorem}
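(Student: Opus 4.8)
The plan is to prove uniform ergodicity by establishing a one-step Doeblin minorization valid over the \emph{entire} state space: I will exhibit a constant $\delta\in(0,1]$ and a probability density $q$ on $\mathbb{R}^{p+q}$ with $k(\bm{\eta}|\bm{\eta}^{\prime})\ge\delta\,q(\bm{\eta})$ for all $\bm{\eta},\bm{\eta}^{\prime}$. Since $\bm{\Psi}$ is Harris ergodic, such a bound yields $\|K^m(\bm{\eta}^{\prime},\cdot)-\Pi(\cdot|\bm{y})\|\le(1-\delta)^m$ from every starting point, so that $G$ in \eqref{eq:ge} may be taken constant and the chain is uniformly ergodic.

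The observation that drives the argument is that in \eqref{eq_eta} the vector $\bm{\mu}=\bm{M}^{T}\bm{\kappa}+\bm{l}$ is \emph{fixed}; the current state $\bm{\eta}^{\prime}$ enters $k(\bm{\eta}|\bm{\eta}^{\prime})$ only through the conditional law of $(\bm{\omega},\bm{\tau})$, and the precision $\bm{\Sigma}=\bm{M}^{T}\bm{\Omega}\bm{M}+\bm{A}(\bm{\tau})$ depends on $\bm{\eta}^{\prime}$ only through those latents. Because $\bm{\Omega}\succeq\bm{0}$, $\tau_j\ge\tau_0>0$ and $\bm{Q}\succ\bm{0}$, we have $\bm{\Sigma}\succeq\bm{A}(\bm{\tau})\succeq c_0\bm{I}$ with $c_0=\min\{\lambda_{\min}(\bm{Q}),\tau_0\}>0$, a lower eigenvalue bound uniform in $\bm{\eta}^{\prime}$. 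First I would restrict the integral \eqref{eq_mtd} to the compact set $S=\{\,0\le\omega_i\le B\,\}\times\{\,\tau_0\le\tau_j\le T\,\}$. On $S$ the precision is also bounded above, $\bm{\Sigma}\preceq\lambda^{*}\bm{I}$ with $\lambda^{*}=B\,\lambda_{\max}(\bm{M}^{T}\bm{M})+\max\{\lambda_{\max}(\bm{Q}),T\}$, so combining $c_0\bm{I}\preceq\bm{\Sigma}\preceq\lambda^{*}\bm{I}$, the fixed $\bm{\mu}$, and $\bm{\mu}^{T}\bm{\Sigma}^{-1}\bm{\mu}\le c_0^{-1}\|\bm{\mu}\|^2$, the Gaussian density is bounded below on $S$ by a function of $\bm{\eta}$ alone,
\[
\pi(\bm{\eta}|\bm{\omega},\bm{\tau},\bm{y})\ \ge\ h_0(\bm{\eta}):=(2\pi)^{-(p+q)/2}c_0^{(p+q)/2}\exp\Big(-\tfrac{\lambda^{*}}{2}\|\bm{\eta}\|^2-\|\bm{\mu}\|\,\|\bm{\eta}\|-\tfrac{\|\bm{\mu}\|^2}{2c_0}\Big),
\]
which is independent of $(\bm{\omega},\bm{\tau})\in S$ and of $\bm{\eta}^{\prime}$. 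Hence $k(\bm{\eta}|\bm{\eta}^{\prime})\ge h_0(\bm{\eta})\,\Pr\!\big((\bm{\omega},\bm{\tau})\in S\mid\bm{\eta}^{\prime},\bm{y}\big)$, and it remains to bound the latent-variable probability below uniformly in $\bm{\eta}^{\prime}$.

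The hard part will be this last bound, because the conditional law of $(\bm{\omega},\bm{\tau})$ genuinely degenerates as $\|\bm{\eta}^{\prime}\|\to\infty$ (the $\mathrm{PG}(n_i,|\bm{m}_i^{T}\bm{\eta}^{\prime}|)$ mass collapses toward $0$ and each $\tau_j$ concentrates at $\tau_0$); consequently no \emph{pointwise} lower bound on the latent densities can be uniform, and only a bound on the \emph{probability} of $S$ will survive. Using the conditional independence of $\bm{\omega}$ and $\bm{\tau}$ given $\bm{\eta}^{\prime}$, I would control each factor through its mean. For the P\'olya--Gamma factors, $E[\omega_i\mid\bm{\eta}^{\prime}]=\tfrac{n_i}{2c_i}\tanh(c_i/2)\le n_i/4$ for every $c_i=|\bm{m}_i^{T}\bm{\eta}^{\prime}|\ge0$ (since $\tanh x\le x$), so Markov's inequality gives $\Pr(\omega_i\le B\mid\bm{\eta}^{\prime})\ge 1-n_i/(4B)$ uniformly in $\bm{\eta}^{\prime}$. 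For the truncated-Gamma factors, the hypothesis $a_j+q_j/2\ge1$ together with $b_j>0$ and $\tau_0>0$ is what I would invoke to control the conditional mean $E[\tau_j\mid\bm{\eta}^{\prime}]$: the map $\rho\mapsto E_\rho[\tau_j]$ is continuous on the admissible range of rates $[b_j,\infty)$ (with $\rho=b_j+\|\bm{u}_j^{\prime}\|^2/2$) and tends to $\tau_0$ as $\rho\to\infty$, so it is bounded by some $\bar\tau_j<\infty$; since $\tau_j\ge\tau_0$ always, Markov's inequality then gives $\Pr(\tau_0\le\tau_j\le T\mid\bm{\eta}^{\prime})\ge 1-\bar\tau_j/T$.

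Choosing $B$ and $T$ large enough makes $\prod_{i}(1-n_i/(4B))\prod_{j}(1-\bar\tau_j/T)=:\delta_1>0$, uniform in $\bm{\eta}^{\prime}$, whence $k(\bm{\eta}|\bm{\eta}^{\prime})\ge\delta_1 h_0(\bm{\eta})$. Setting $\delta=\delta_1\int_{\mathbb{R}^{p+q}}h_0(\bm{\eta})\,d\bm{\eta}$ (which lies in $(0,1]$ because $h_0\delta_1\le k$ integrates the transition density) and $q=h_0/\!\int h_0$ completes the minorization $k(\bm{\eta}|\bm{\eta}^{\prime})\ge\delta\,q(\bm{\eta})$, and hence the uniform ergodicity with rate $1-\delta$. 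Beyond the P\'olya--Gamma mean bound, the principal technical obstacle is the uniform control of the truncated-Gamma mean over the \emph{unbounded} range of rates, which is precisely where the stated moment/shape condition on $(a_j,q_j,b_j,\tau_0)$ is needed; the compact truncation in both latents (required to bound $\bm{\Sigma}$ from above and so keep $h_0$ integrable) is the other place where care is essential.
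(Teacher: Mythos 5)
Your proposal is correct, but it reaches the minorization \eqref{eq_min} by a genuinely different route than the paper. The paper never truncates the latent variables: it bounds the normal density pointwise using only $\tau_j\ge\tau_0$ (so $|\bm{\Sigma}|\ge|\bm{A}(\tau_0)|$ and $\bm{\mu}^T\bm{\Sigma}^{-1}\bm{\mu}\le\bm{\mu}^T\bm{A}(\tau_0)^{-1}\bm{\mu}$), retains the factors $\exp[-\omega_i(\bm{m}_i^T\bm{\eta})^2/2]$, and then integrates $\bm{\omega}$ and $\bm{\tau}$ out \emph{exactly}: the $\bm{\omega}$-integral via the P\'olya--Gamma Laplace transform, whose resulting $\cosh^{-n_i}$ terms are controlled with $\cosh(a+b)\le2\cosh a\cosh b$ and $\cosh x\le e^{x}$, and the $\bm{\tau}$-integral via a monotonicity argument on ratios of incomplete gamma functions (the function $g$ with $g'\le0$), which is exactly where the hypothesis $a_j+q_j/2\ge1$ enters. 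You instead restrict the integral to a compact set $S$ of latent values, bound the Gaussian density below on $S$ by a fixed integrable $h_0(\bm{\eta})$ (using $c_0\bm{I}\preceq\bm{\Sigma}\preceq\lambda^*\bm{I}$ there), and bound $\Pr(S\mid\bm{\eta}')$ below uniformly by conditional means plus Markov's inequality: the P\'olya--Gamma mean bound $\frac{n_i}{2c_i}\tanh(c_i/2)\le n_i/4$, and uniform boundedness of the truncated-Gamma mean over rates $\rho\in[b_j,\infty)$. Both arguments are valid. What the paper's route buys is explicitness: its $\delta$ and $h$ are in closed form, which is what makes the computable total-variation bound of Remark 3 possible. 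What your route buys is elementarity and, notably, generality: your control of $E_\rho[\tau_j]$ (continuity in $\rho$ plus the limit $\tau_0$ as $\rho\to\infty$, both by dominated convergence) holds for \emph{any} $a_j+q_j/2>0$, so despite your statement that the hypothesis $a_j+q_j/2\ge1$ is ``what I would invoke'' there, your argument never actually uses it --- the truncation $\tau_0>0$ does all the work, and you in fact prove the theorem under weaker assumptions than stated. Two points to tighten when writing this up: (i) supply the dominated-convergence argument (or an explicit bound such as $E_\rho[\tau_j]\le\tau_0+\rho^{-1}\tau_0^{1-\alpha_j}\int_0^\infty s(\tau_0+s/b_j)^{\alpha_j-1}e^{-s}\,ds$ for $\alpha_j=a_j+q_j/2\ge1$), since as written $\bar\tau_j$ is non-constructive; (ii) cite \cite{polson2013bayesian} for the PG$(b,c)$ mean formula you use.
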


\begin{proof}
We show that there exists a $\delta>0$ and a density function $h: \mathbb{R}^{p+q} \rightarrow [0,\infty)$ such that, for all $\bm{\eta}^{\prime}, \bm{\eta} \in \mathbb{R}^{p+q}$,
\begin{equation}
\label{eq_min}
k(\bm{\eta}|\bm{\eta}^{\prime}) \geq \delta h(\bm{\eta}).
\end{equation}

By \cite{roberts2004general}[Theorem 8], \eqref{eq_min} implies that the Markov chain $\bm{\Psi}$ is uniformly ergodic. Furthermore, under \eqref{eq_min}, \eqref{eq:ge} holds with $G=1$ and $t=1-\delta$.

%where 
%\[
%c\left(\tau_{0},a_{j},b_{j},q_{j},\bm{u}_{j}^{T}\bm{u}_{j}\right)=\int_{\tau_{0}}^{\infty}\tau_{j}^{a_{j}+q_{j}/2-1}\exp\left[-\left(\frac{1}{2}\bm{u}_{j}^{T}\bm{u}_{j}+b_{j}\right)\tau_{j}\right]d\tau_{j}.
%\]

For $\tau_j \geq \tau_0$, $j=1,\dots, q$, $\bm{\Sigma}\geq \bm{M}^{T}\bm{\Omega}\bm{M}+\bm{A}\left(\tau_{0}\right)$, that is $\bm{\Sigma} - (\bm{M}^{T}\bm{\Omega}\bm{M}+\bm{A}\left(\tau_{0}\right))$ is positive semidefinite. So
$\left|\bm{\Sigma}\right|\geq\left|\bm{M}^{T}\bm{\Omega}\bm{M}+\bm{A}\left(\tau_{0}\right)\right|=\left|\bm{A}\left(\tau_{0}\right)\right|\left|\tilde{\bm{M}}^{T}\bm{\Omega}\tilde{\bm{M}}+\bm{I}\right|\geq\left|\bm{A}\left(\tau_{0}\right)\right|,$
where $\tilde{\bm{M}}=\bm{M}\bm{A}\left(\tau_{0}\right)^{-1/2}$. And $
\bm{\mu}^{T}\bm{\Sigma}^{-1}\bm{\mu} \leq\bm{\mu}^{T}\left(\bm{M}^{T}\bm{\Omega}\bm{M}+\bm{A}\left(\tau_{0}\right)\right)^{-1}\bm{\mu} \leq\bm{\mu}^{T}\bm{A}\left(\tau_{0}\right)^{-1}\bm{\mu}$. Then, 
\begin{align*}
\pi(\bm{\eta}|\bm{\omega},\bm{\tau},\bm{y}) & =\left(2\pi\right)^{-\frac{p+q}{2}}\left|\bm{\Sigma}\right|^{\frac{1}{2}}\exp\left[-\frac{1}{2}\left(\bm{\eta}-\bm{\Sigma}^{-1}\bm{\mu}\right)^{T}\bm{\Sigma}\left(\bm{\eta}-\bm{\Sigma}^{-1}\bm{\mu}\right)\right]\\
& \geq\left(2\pi\right)^{-\frac{p+q}{2}}\left|\bm{A}\left(\tau_{0}\right)\right|^{1/2}\exp\left[-\frac{1}{2}\left(\bm{\eta}^{T}\bm{\Sigma}\bm{\eta}-2\bm{\eta}^{T}\bm{\mu}\right)-\frac{1}{2}\bm{\mu}^{T}\bm{A}\left(\tau_{0}\right)^{-1}\bm{\mu}\right]\\
& =\left(2\pi\right)^{-\frac{p+q}{2}}\left|\bm{A}\left(\tau_{0}\right)\right|^{1/2}\exp\left[-\frac{1}{2}\bm{\beta}^{T}\bm{Q}\bm{\beta}-\frac{1}{2}\bm{u}^{T}\bm{D}(\bm{\tau})\bm{u}+\bm{\eta}^{T}\bm{\mu}-\frac{1}{2}\bm{\mu}^{T}\bm{A}\left(\tau_{0}\right)^{-1}\bm{\mu}\right]\\
& \times\exp\left[-\frac{1}{2}\sum_{i=1}^{N}\omega_{i}\left(\bm{m}_{i}^{T}\bm{\eta}\right)^{2}\right].
\end{align*}

Therefore, 
\begin{align*}
& \pi(\bm{\eta}|\bm{\omega},\bm{\tau},\bm{y})\pi(\bm{\omega,\tau}|\bm{\eta}^{\prime},\bm{y})\\
& \geq\left(2\pi\right)^{-\frac{p+q}{2}}\left|\bm{A}\left(\tau_{0}\right)\right|^{1/2}\exp\left[-\frac{1}{2}\bm{\beta}^{T}\bm{Q}\bm{\beta}-\frac{1}{2}\bm{u}^{T}\bm{D}(\bm{\tau})\bm{u}+\bm{\eta}^{T}\bm{\mu}-\frac{1}{2}\bm{\mu}^{T}\bm{A}\left(\tau_{0}\right)^{-1}\bm{\mu}\right]\\
& \times\prod_{j=1}^{r}\frac{1}{c\left(\tau_{0},a_{j} + q_{j}/2, b_j +\bm{u}_{j}^{\prime T}\bm{u}_{j}^{\prime}/2\right)}\tau_{j}^{a_{j}+q_{j}/2-1}\exp\left[-\left(b_{j}+\bm{u}_{j}^{\prime T}\bm{u}_{j}^{\prime}/2\right)\tau_{j}\right]I(\tau_j \geq \tau_0)\\
& \times\prod_{i=1}^{N}\cosh^{n_i}\left(\frac{\left|\bm{m}_{i}^{T}\bm{\eta}^{\prime}\right|}{2}\right)\exp\left[-\frac{\left(\bm{m}_{i}^{T}\bm{\eta}^{\prime}\right)^{2}+\left(\bm{m}_{i}^{T}\bm{\eta}\right)^{2}}{2}\omega_{i}\right]p\left(\omega_{i}\right).
\end{align*}

According to \cite{polson2013bayesian} and \cite{choi2013polya}, 
\begin{align*}
 & \int_{\mathbb{R}_{+}}\exp\left[-\frac{\left(\bm{m}_{i}^{T}\bm{\eta}^{\prime}\right)^{2}+\left(\bm{m}_{i}^{T}\bm{\eta}\right)^{2}}{2}\omega_{i}\right]p\left(\omega_{i}\right)d\omega_{i} =  \left[\cosh\left(\frac{\sqrt{\left(\bm{m}_{i}^{T}\bm{\eta}^{\prime}\right)^{2}+\left(\bm{m}_{i}^{T}\bm{\eta}\right)^{2}}}{2}\right)\right]^{-n_{i}}\\
\geq & \left[\cosh\left(\frac{\left|\bm{m}_{i}^{T}\bm{\eta}^{\prime}\right|}{2}+\frac{ \left| \bm{m}_{i}^{T}\bm{\eta}\right|}{2}\right)\right]^{-n_{i}}
\geq  \left[2\cosh\left(\frac{\left|\bm{m}_{i}^{T}\bm{\eta}^{\prime}\right|}{2}\right)\cosh\left(\frac{\left|\bm{m}_{i}^{T}\bm{\eta}\right|}{2}\right)\right]^{-n_{i}},
\end{align*}
implying
\begin{align*}
 & \cosh^{n_{i}}\left(\frac{\left|\bm{m}_{i}^{T}\bm{\eta}^{\prime}\right|}{2}\right)\int_{\mathbb{R}_{+}}\exp\left[-\frac{\left(\bm{m}_{i}^{T}\bm{\eta}^{\prime}\right)^{2}+\left(\bm{m}_{i}^{T}\bm{\eta}\right)^{2}}{2}\omega_{i}\right]p\left(\omega_{i}\right)d\omega_{i} \geq 2^{-n_{i}}\cosh^{-n_{i}}\left(\frac{\left|\bm{m}_{i}^{T}\bm{\eta}\right|}{2}\right)\\
\geq & 2^{-n_{i}}\left[\exp\left(\frac{\left|\bm{m}_{i}^{T}\bm{\eta}\right|}{2}\right)\right]^{-n_{i}}\geq 2^{-n_{i}}\left[\exp\left(\frac{\left(\bm{m}_{i}^{T}\bm{\eta}\right)^{2}+1}{4}\right)\right]^{-n_{i}}
=  2^{-n_{i}}e^{-n_{i}/4}\exp\left[-\frac{n_{i}}{4}\left(\bm{m}_{i}^{T}\bm{\eta}\right)^{2}\right].
\end{align*}
So we have,
\begin{align*}
& \int_{\mathbb{R}_{+}^{N}}\pi(\bm{\eta}|\bm{\omega},\bm{\tau},\bm{y})\pi(\bm{\omega,\tau}|\bm{\eta}^{\prime},\bm{y})d\bm{\omega}\\
& \geq\left(2\pi\right)^{-\frac{p+q}{2}}\left|\bm{A}\left(\tau_{0}\right)\right|^{1/2}\exp\left[-\frac{1}{2}\bm{\beta}^{T}\bm{Q}\bm{\beta}+\bm{\eta}^{T}\bm{\mu}-\frac{1}{2}\bm{\mu}^{T}\bm{A}\left(\tau_{0}\right)^{-1}\bm{\mu}\right]\\
& \times 2^{-n}e^{-\frac{n}{4}}\exp\left[-\frac{1}{4}\bm{\eta}^{T}\bm{M}^{T}\bm{\Lambda}\bm{M}\bm{\eta}\right]\\
& \times\prod_{j=1}^{r}[c\left(\tau_{0},a_{j} + q_j/2,b_{j} +\bm{u}_{j}^{\prime T}\bm{u}_{j}^{\prime}/2\right)]^{-1}\tau_{j}^{a_{j}+q_{j}/2-1}\exp\left[-\left(b_{j}+\bm{u}_{j}^{\prime T}\bm{u}_{j}^{\prime}/2+\bm{u}_{j}^{T}\bm{u}_{j}/2\right)\tau_{j}\right]I(\tau_j \geq \tau_0),
\end{align*}
where $n = \sum_{i=1}^{N} n_i$ and $\bm{\Lambda}$ is the $N \times N$ diagonal matrix with $i$th diagonal element $n_i$. Thus, 
\begin{align*}
& k(\bm{\eta}|\bm{\eta}^{\prime }) = \int_{\mathbb{R}_+^{r}}\int_{\mathbb{R}_{+}^{N}}\pi(\bm{\eta}|\bm{\omega},\bm{\tau},\bm{y})\pi(\bm{\omega,\tau}|\bm{\eta}^{\prime},\bm{y})d\bm{\omega}d\bm{\tau}\\
& \geq\left(2\pi\right)^{-\frac{p+q}{2}}\left|\bm{A}\left(\tau_{0}\right)\right|^{1/2}\exp\left[-\frac{1}{2}\bm{\beta}^{T}\bm{Q}\bm{\beta}+\bm{\eta}^{T}\bm{\mu}-\frac{1}{2}\bm{\mu}^{T}\bm{A}\left(\tau_{0}\right)^{-1}\bm{\mu}\right]\times {2^{-n}e^{-\frac{n}{4}}\exp\left[-\frac{1}{4}\bm{\eta}^{T}\bm{M}^{T}\bm{\Lambda}\bm{M}\bm{\eta}\right]}\\
& \times \prod_{j=1}^{r}\frac{1}{c\left(\tau_{0},a_{j} + q_j/2,b_{j} + \bm{u}_{j}^{\prime T}\bm{u}_{j}^{\prime}/2\right)}\int_{\tau_{0}}^{\infty}\tau_{j}^{a_{j}+q_{j}/2-1}\exp\left[-\left(b_{j}+\bm{u}_{j}^{\prime T}\bm{u}_{j}^{\prime}/2+\bm{u}_{j}^{T}\bm{u}_{j}/2\right)\tau_{j}\right]d\tau_{j}.
\end{align*}
Now consider 
\begin{align*}
 & \frac{1}{c\left(\tau_{0},a_{j} + q_j/2,b_{j} +\bm{u}_{j}^{\prime T}\bm{u}_{j}^{\prime}/2\right)}\int_{\tau_{0}}^{\infty}\tau_{j}^{a_{j}+q_{j}/2-1}\exp\left[-\left(b_{j}+\bm{u}_{j}^{\prime T}\bm{u}_{j}^{\prime}/2+\bm{u}_{j}^{T}\bm{u}_{j}/2\right)\tau_{j}\right]d\tau_{j}\\
= & \frac{\left(b_{j}+\bm{u}_{j}^{\prime T}\bm{u}_{j}^{\prime}/2\right)^{a_{j}+q_{j}/2}}{\left(b_{j}+\bm{u}_{j}^{\prime T}\bm{u}_{j}^{\prime}/2+\bm{u}_{j}^{T}\bm{u}_{j}/2\right)^{a_{j}+q_{j}/2}}\cdot\frac{\int_{\left(b_{j}+\bm{u}_{j}^{\prime T}\bm{u}_{j}^{\prime}/2+\bm{u}_{j}^{T}\bm{u}_{j}/2\right)\tau_{0}}^{\infty}x^{a_{j}+q_{j}/2-1}\exp\left(-x\right)dx,}{\int_{\left(b_{j}+\bm{u}_{j}^{\prime T}\bm{u}_{j}^{\prime}/2\right)\tau_{0}}^{\infty}x^{a_{j}+q_{j}/2-1}\exp\left(-x\right)dx}
\end{align*}
For $x\geq 0 $, define,
$f_{1}\left(x\right)  =\int_{\left(b_{j}+x+\bm{u}_{j}^{T}\bm{u}_{j}/2\right)\tau_{0}}^{\infty}t^{a_{j}+q_{j}/2-1}\exp\left(-t\right)dt$, 
$f_{2}\left(x\right) =\int_{\left(b_{j}+x\right)\tau_{0}}^{\infty}t^{a_{j}+q_{j}/2-1}\exp\left(-t\right)dt$ and $g\left(x\right) =f_{1}\left(x\right)-\exp\left(-\tau_{0}\bm{u}_{j}^{T}\bm{u}_{j}/2\right)f_{2}\left(x\right)$. 
%\begin{align*}
%g^{\prime}\left(x\right) &=\exp\left[-\left(b_{j}+x+\bm{u}_{j}^{T}\bm{u}_{j}/2\right)\tau_{0}\right]\tau_{0}^{a_{j}+q_{j}/2} \\
%& \times \left[\left(b_{j}+x\right)^{a_{j}+q_{j}/2-1}-\left(b_{j}+x+\bm{u}_{j}^{T}\bm{u}_{j}/2\right)^{a_{j}%+q_{j}/2-1}\right].
%\end{align*}
Since $a_{j}+q_{j}/2-1\geq 0$ by assumption, it can be shown that $g^{\prime}\left(x\right)\leq 0$. And $
g\left(x\right)\geq\lim_{x\rightarrow\infty}g\left(x\right)=0$. Thus $f_{1}\left(x\right)/f_{2}\left(x\right)\geq\exp\left[-\tau_{0}\bm{u}_{j}^{T}\bm{u}_{j}/2\right]$. Also $
(b_{j}+\bm{u}_{j}^{\prime T}\bm{u}_{j}^{\prime}/2)/(b_{j}+\bm{u}_{j}^{\prime T}\bm{u}_{j}^{\prime}/2+\bm{u}_{j}^{T}\bm{u}_{j}/2)\geq b_{j}/(b_{j}+\bm{u}_{j}^{T}\bm{u}_{j}/2)$, 
So 
\begin{align*}
\kappa (\bm{\eta} \vert \bm{\eta}^{\prime})& \geq\left(2\pi\right)^{-\frac{p+q}{2}}\left|\bm{A}\left(\tau_{0}\right)\right|^{1/2}\exp\left[-\frac{1}{2}\bm{\beta}^{T}\bm{Q}\bm{\beta}+\bm{\eta}^{T}\bm{\mu}-\frac{1}{2}\bm{\mu}^{T}\bm{A}\left(\tau_{0}\right)^{-1}\bm{\mu}\right]\\
& \times 2^{-n}e^{-\frac{n}{4}}\exp\left[-\frac{1}{4}\bm{\eta}^{T}\bm{M}^{T}\bm{\Lambda}\bm{M}\bm{\eta}\right] \times \prod_{j=1}^{r}\left(\frac{b_{j}}{b_{j}+\bm{u}_{j}^{T}\bm{u}_{j}/2}\right)^{a_{j}+q_{j}/2}\exp\left(-\tau_{0}\bm{u}_{j}^{T}\bm{u}_{j}/2\right).
\end{align*}

Let 
\begin{align*}
c_1\left(\bm{M},\bm{y}\right) & =\int_{\mathbb{R}^{p+q}}\exp\left[-\frac{1}{2}\bm{\beta}^{T}\bm{Q}\bm{\beta}+\bm{\eta}^{T}\bm{\mu}-\frac{1}{4}\bm{\eta}^{T}\bm{M}^{T}\bm{\Lambda}\bm{M}\bm{\eta}\right]\\
& \times \prod_{j=1}^{r}\left(\frac{b_{j}}{b_{j}+\bm{u}_{j}^{T}\bm{u}_{j}/2}\right)^{a_{j}+q_{j}/2}\exp\left(-\tau_{0}\bm{u}_{j}^{T}\bm{u}_{j}/2\right)d\bm{\eta}\\
& \leq \left(2\pi\right)^{\frac{p+q}{2}}\left|\bm{A}\left(\tau_{0}\right)\right|^{-1/2} \exp\left( \frac{1}{2} \bm{\mu}^T \bm{A}\left(\tau_{0}\right) \bm{\mu} \right)2^n e^{\frac{n}{4}}  < \infty.
\end{align*}
So there exists a density function $h\left(\bm{\eta}\right)$ and $\delta>0$ such that,
\[
k(\bm{\eta}|\bm{\eta}^{\prime}) = \int_{\mathbb{R}_+^{r}}\int_{\mathbb{R}_{+}^{n}}\pi(\bm{\eta}|\bm{\omega},\bm{\tau},\bm{y})\pi(\bm{\omega,\tau}|\bm{\eta}^{\prime},\bm{y})d\bm{\omega}d\bm{\tau}\geq\delta h\left(\bm{\eta}\right),
\]
where 
\begin{align*}
h\left(\bm{\eta}\right) & =\frac{1}{c_1\left(\bm{M},\bm{y}\right)}\exp\left[-\frac{1}{2}\bm{\beta}^{T}\bm{Q}\bm{\beta}+\bm{\eta}^{T}\bm{\mu}-\frac{1}{4}\bm{\eta}^{T}\bm{M}^{T}\bm{\Lambda}\bm{M}\bm{\eta}\right]\\
\times & \prod_{j=1}^{r}\left[b_{j}/(b_{j}+\bm{u}_{j}^{T}\bm{u}_{j}/2)\right]^{a_{j}+q_{j}/2}\exp\left(-\tau_{0}\bm{u}_{j}^{T}\bm{u}_{j}/2\right),\\
\text{and } \delta & =\left(2\pi\right)^{-\frac{p+q}{2}}\left|\bm{A}\left(\tau_{0}\right)\right|^{1/2}2^{-n}e^{-\frac{n}{4}}\cdot\exp\left[-\frac{1}{2}\bm{\mu}^{T}\bm{A}\left(\tau_{0}\right)^{-1}\bm{\mu}\right]c_1\left(\bm{M},\bm{y}\right).
\end{align*}

Hence the Markov chain is uniformly ergodic.

\end{proof}

\begin{remark}
	\label{rem:condition}
Since Theorem 1 does not put any conditions on $\bm{y}$, $\bm{X}$, $\bm{Z}$, $N$, $p$ and $q$, it is applicable in high dimensional situations where $p$ (or $q$) can be much larger than $N$.
\end{remark}

\begin{remark}
Following the proof of Theorem 1, the uniform ergodicity result in \cite{choi2013polya} can be extended to binomial data.
\end{remark}

\begin{remark}
%By using inequality we can find a lower a bound for $\delta$, we can find a computable upper bound for $t = 1 - \delta$.
Since $1/(x+1)\geq\exp\left(-x\right)$, we have  $b_j/(b_{j}+\bm{u}_{j}^{T}\bm{u}_{j}/2) \geq \exp\left[-\bm{u}_{j}^{T}\bm{u}_{j}/(2b_{j})\right]$. Using this inequality, we have 
\[\delta\geq2^{-n}e^{-\frac{n}{4}}\left|\bm{A}\left(\tau_{0}\right)\right|^{1/2}\left|\bm{\Sigma}_{1}\right|^{-1/2}\exp\left[-\frac{1}{2}\bm{\mu}^{T}\bm{A}\left(\tau_{0}\right)^{-1}\bm{\mu}+\frac{1}{2}\bm{\mu}^{T}\bm{\Sigma}_{1}^{-1}\bm{\mu}\right],
\]
where $\bm{\Sigma}_{1}=\frac{1}{2}\bm{M}^{T}\bm{\Lambda}\bm{M}+ \bm{Q}\oplus \left [\oplus_{j=1}^{r}\{\tau_{0} + (a_{j}+q_{j})/(2b_{j})\}\bm{I}_{q_{j}} \right] $. This in turn, gives a computable upper bound to the total variation distance to the stationary in \eqref{eq:ge}.
\end{remark}

\section{Discussion}
\label{sec_dis}
 We prove uniform ergodicity of the two-block Gibbs sampler for Bayesian logistic linear mixed models, which guarantees the existence of central limit theorem for MCMC estimators under a finite second moment condition \citep{jones2004markov}. Thus, our result has important practical implications as it allows for obtaining valid asymptotic standard errors for the posterior estimates \citep{flegal2010batch}. % In the proof, we use the same condition as in \cite{wang2017convergence} on the precision parameters. 
 Convergence rates analysis of Gibbs samplers for Bayesian logistic linear mixed models with improper priors is a potential  future project.

\bibliographystyle{apalike} 
\bibliography{ref_mixed_proper}

\end{document}